\definecolor{darkgreen}{rgb}{0,.5,0}
\numberwithin{equation}{section}
\newtheorem{theorem}{Theorem}[section]
\newtheorem{lemma}{Lemma}[section]
\newtheorem*{rem*}{Remark}
\newtheorem{remark}{Remark}[section]
\begin{document}
\footnotetext{
\emph{2010 Mathematics Subject Classification.} Primary: 42B20; Secondary: 47B07, 42B35, 47G99.

\emph{Key words and phrases.} BMO space; Calder\'{o}n-Zygmund operator; Characterization; Commutators.}

\title[]{A note on commutator in the multilinear setting}

\author[]{Dinghuai Wang, Jiang Zhou$^\ast$ and Zhidong Teng}
\address{College of Mathematics and System Sciences, Xinjiang University,  Urumqi 830046 \endgraf
         Republic of China}
\email{Wangdh1990@126.com; zhoujiang@xju.edu.cn; zhidong1960@163.com}
\thanks{The research was supported by National Natural Science Foundation
of China (Grant No.11661075 and No. 11771373). \\ \qquad * Corresponding author, zhoujiang@xju.edu.cn.}

\begin{abstract}
Let $m\in \mathbb{N}$ and $\vec{b}=(b_{1},\cdots,b_{m})$ be a collection of locally integrable functions. It is proved that $b_{1},b_{2},\cdots, b_{m}\in BMO$ if and only if
$$\sup_{Q}\frac{1}{|Q|^{m}}\int_{Q^{m}}\Big|\sum_{i=1}^{m}\big(b_{i}(x_{i})-(b_{i})_{Q}\big)\Big|d\vec{x}<\infty,$$
where $(b_{i})_{Q}=\frac{1}{|Q|}\int_{Q}b_{i}(x)dx$. As an application, we show that if the linear commutator of certain multilinear Calder\'{o}n-Zygmund operator $[\Sigma \vec{b},T]$ is bounded from $L^{p_{1}}\times\cdots\times L^{p_{m}}$ to $L^{p}$ with $\sum_{i=1}^{m}1/p_{i}=1/p$ and $1<p,p_{1},\cdots,p_{m}<\infty$, then $b_{1},\cdots,b_{m}\in BMO$. Therefore, the result of Chaffee \cite{C} (or Li and Wick \cite{LW}) is extended to the general case.

\end{abstract}
\maketitle

\maketitle

\section{Introduction}

In this paper we resolve a problem that has been open for a while in the multilinear Calder\'{o}n-Zygmund theory. Namely, whether the boundedness of the linear commutator of multilinear Calder\'{o}n-Zygmund operator can be used to characterize the $BMO$ space.

We briefly summarize some classical and recent works in the literature, which lead to the results presented here. The space $BMO$ of functions with bounded mean oscillation was introduced by John and Nirenberg in \cite{JN} and plays a crucial role in harmonic analysis and partial differential equations; see for example, \cite{G,S}. There are a number of classical results that demonstrate $BMO$ functions are the right collections to do harmonic analysis on the boundedness of commutators. A well-known result of Coifman, Rochberg and Weiss \cite{CRW} states that the commutator
$$[b,T](f)=bT(f)-T(bf)$$
is bounded on some $L^p$, $1<p<\infty$, if and only if $b\in BMO$, where $T$ is the Riesz transform. Janson extended the result in \cite{J} via the commutators of certain Calder\'{o}n-Zygmund operators with smooth homogeneous kernels. The theory was then extended and generalized to several directions. For instance, Bloom \cite{B} investigated the same result in the weighted setting; Uchiyama \cite{U} extended the boundednss results on the commutator to compactness; Krantz and Li in \cite{KL1} and \cite{KL2} have applied commutator theory to give a compactness characterization of Hankel operators on holomorphic Hardy spaces $H^{2}(D)$, where $D$ is a bounded, strictly pseudoconvex domain in $\mathbb{C}^n$. It is perhaps for this important reason that the boundedness of $[b, T]$ attracted one¡¯s attention among researchers in PDEs

Many authors are interested in multilinear operators, which is a natural generalization
of linear case. Multilinear Calder\'{o}n-Zygmund operators were introduced by Coifman and Meyer \cite{CM} but were not systematically studied for about a quarter century until the appearance of \cite{GT1}. In 2003, P\'{e}rez and Torres in \cite{PT} defined the $m$-linear commutator of $\vec{b}=(b_{1},\cdots,b_{m})$ and the $m$-linear Calder\'{o}n-Zygmund operator $T$ to be 
$$[\Sigma\vec{b},T](\vec{f})=\sum_{i=1}^{m}[b_{i},T]_{i}(\vec{f}),$$
where each term is the commutator of $b_{i}$ and $T$ in the $j$th entry of $T$, that is,
$$[b_{i},T]_{i}(\vec{f})(x)=b_{i}T(f_{1},\cdots,f_{i},\cdots,f_{m})-T(f_{1},\cdots,b_{i}f_{i},\cdots,f_{m}).$$
This definition coincides with the linear commutator $[b,T]$ when $m=1$. They proved that if $1<p,p_{1},\cdots,p_{m}<\infty$ and $1/p=1/p_{1}+\cdots+1/p_{m}$, then
$$b_{1},\cdots,b_{m}\in BMO\Longrightarrow[\Sigma\vec{b},T]: L^{p_{1}}\times \cdots \times L^{p_{m}}\rightarrow L^{p}.$$
It was recently obtained by Chaffee \cite{C} that if $1<p,p_{1},\cdots,p_{m}<\infty$ and $1/p=1/p_{1}+\cdots+1/p_{m}$, then
$$[b_{i},T]_{i}: L^{p_{1}}\times \cdots \times L^{p_{m}}\rightarrow L^{p}\Longrightarrow b_{i}\in BMO.$$
is bounded from $L^{p_{1}}\times \cdots\times L^{p_{m}}$ to $L^{p}$, then $b_{i}\in BMO$.
It was then also revisited by Li and Wick \cite{LW} using different techniques. In both cases the results are also consider only one term $[b_{i},T]_{i}$. In this paper, we will extend the result of Chaffee \cite{C} (or Li and Wick \cite{LW}) to the general case, that is,
$$[\Sigma\vec{b},T]:L^{p_{1}}\times \cdots L^{p_{m}}\rightarrow L^{p}\Longrightarrow b_{1},\cdots,b_{m}\in BMO.$$

We now recall that the definition of $m$-linear Calder\'{o}n-Zygmund operators. In order to define $m$-linear Calder\'{o}n-Zygmund operators, we first define the class of Calder\'{o}n-Zygmund kernels. Let $K(x,y_{1},\cdots,y_{m})$ be a locally integrable integrable function defined away from the diagonal $x=y_{1}=\cdots=y_{m}$. If for some parameters $A$ and $\epsilon$, both positive, we have
$$|K(y_{0},y_{1},\cdots,y_{m})|\leq \frac{A}{\big(\sum_{k,l=0}^{m}|y_{k}-y_{l}|\big)^{mn}}$$
and
$$|K(y_{0},\cdots,y_{j},\cdots,y_{m})-K(y_{0},\cdots,y'_{j},\cdots,y_{m})|\leq \frac{A}{\big(\sum_{k,l=0}^{m}|y_{k}-y_{l}|\big)^{mn+\epsilon}}$$
whenever $0\leq j\leq m$ and $|y_{j}-y'_{j}|\leq \frac{1}{2}\max_{0\leq k\leq m}|y_{j}-y_{k}|$, then we say $K$ is an $m$-linear Calder\'{o}n-Zygmund kernel. Then for some $m$-linear $T$ defined by
$$T(f_{1},\cdots,f_{m})(x)=\int_{(\mathbb{R}^n)^m}K(x,y_{1},\cdots,y_{m})f_{1}(y_{1})\cdots f_{m}(y_{m})d\vec{y},$$
where $K$ is a Calder\'{o}n-Zygmund kernel. If
$$T:L^{p_{1}}\times\times L^{p_{m}}\rightarrow L^{p},$$
for some $1<p_{1},\cdots,p_{m}<\infty$ satisfying $1/p=\sum_{i=1}^{m}1/p_{i}$, we say $T$ is an $m$-linear Calder\'{o}n-Zygmund operator. Many basic properties of these operators were studied by Grafakos and Torres in \cite{GT1}. In addition, we say that an operator is of homogeneous operator if the kernel $K(x,y_{1},\cdots,y_{m})$ is actually of the form $K(x-y_{1},\cdots,x-y_{m})$. 
\begin{theorem}\label{main}
Let $m\in\mathbb{N}$ and $1<p,p_{1},\cdots,p_{m}<\infty$ with $1/p=\sum_{i=1}^{m}1/p_{i}$. For a bilinear operator $T$ defined on $L^{p_{1}}\times \cdots\times L^{p_{m}}$ which can be represented as
$$T(f_{1},\cdots,f_{m})(x)=\int_{(\mathbb{R}^n)^m}K(x-y_{1},\cdots,x-y_{m})f_{1}(y_{1})\cdots f_{m}(y_{m})d\vec{y}$$
for all $x\notin \cap_{i=1}^{m}{\rm supp}f_{i}$, where $K$ is a homogeneous kernel such that on some ball $B\subset \mathbb{R}^{mn}$, we have that the Fourier series of $1/K$ is absolutely convergent. We then have that
$$[\Sigma\vec{b},T]:L^{p_{1}}\times \cdots L^{p_{m}}\rightarrow L^{p}\Longrightarrow b_{1},\cdots,b_{m}\in BMO.$$
\end{theorem}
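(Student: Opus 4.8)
The plan is to reduce the assertion to the statement that each individual $b_i$ lies in $BMO$, and to obtain this by testing the commutator against carefully modulated indicator functions, in the spirit of Janson's classical argument \cite{J} but adapted to the full $m$-linear sum. By the characterization quoted in the abstract it suffices to bound $\sup_Q\frac1{|Q|^m}\int_{Q^m}|\sum_i(b_i(x_i)-(b_i)_Q)|\,d\vec x$; since this quantity is dominated by $\sum_i\|b_i\|_{BMO}$, I will in fact prove the sharper statement $\|b_i\|_{BMO}\lesssim\|[\Sigma\vec b,T]\|$ for each fixed $i$. Throughout I will use, for $x$ outside the common support of the $f_j$, the representation
$$[\Sigma\vec b,T](\vec f)(x)=\int_{(\mathbb R^n)^m}\Big(\sum_{k=1}^m(b_k(x)-b_k(y_k))\Big)K(x-y_1,\dots,x-y_m)\prod_{j}f_j(y_j)\,d\vec y.$$

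First I would fix the geometry. Writing $B=B(z_0,\rho)\subset\mathbb R^{mn}$ for the ball on which $1/K$ has an absolutely convergent Fourier expansion $1/K(z)=\sum_\nu a_\nu e^{\,iv_\nu\cdot z}$ with $\sum_\nu|a_\nu|<\infty$, and using the homogeneity $K(\lambda z)=\lambda^{-mn}K(z)$, this expansion transfers to any dilate $tB$; for a cube of side $\ell$ I take $t=\ell\sqrt{mn}/\rho$. Given a cube $Q_i=Q(\eta_i,\ell)$ on which I wish to control the oscillation of $b_i$, I set the output cube $\tilde Q=Q(\eta_i+tz_0^i,\ell)$ and the remaining input cubes $Q_j=Q(\eta_i+t(z_0^i-z_0^j),\ell)$, so that for $x\in\tilde Q$ and $y_j\in Q_j$ the vector $(x-y_1,\dots,x-y_m)$ remains in $tB$ and the series for $1/K$ may be inserted. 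One must check that $\tilde Q$ avoids $\bigcap_j Q_j$, so that the representation above is valid on $\tilde Q$; this is where I would use that $z_0\neq0$ and that the cubes $Q_j$ sit at mutually different centres.

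The heart of the matter is a decoupling identity. I test in the $i$-th slot with $\psi\,e^{-iv_\nu^i\cdot/t}\mathbf 1_{Q_i}$, where $\psi$ is a bounded function on $Q_i$ to be chosen, and in every other slot with $e^{-iv_\nu^j\cdot/t}\mathbf 1_{Q_j}$; then I multiply by $a_\nu e^{\,i(\sum_j v_\nu^j)x/t}$ and sum over $\nu$. Since $e^{\,i(\sum_j v_\nu^j)x/t}\prod_j e^{-iv_\nu^j y_j/t}=e^{\,iv_\nu\cdot(x-y_1,\dots,x-y_m)/t}$, the absolutely convergent series collapses $K$ against $1/K$ and yields, for $x\in\tilde Q$,
$$\sum_\nu a_\nu e^{\,i(\sum_j v_\nu^j)x/t}[\Sigma\vec b,T](\dots)(x)=\frac1{t^{mn}}\int_{\prod_jQ_j}\Big(\sum_k(b_k(x)-b_k(y_k))\Big)\psi(y_i)\,d\vec y.$$
I then choose $\psi=\mathrm{sgn}(b_i-(b_i)_{Q_i})-\big(\mathrm{sgn}(b_i-(b_i)_{Q_i})\big)_{Q_i}$, which has mean zero over $Q_i$ and satisfies $\|\psi\|_\infty\le2$. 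The mean-zero property simultaneously annihilates the entire output contribution $\sum_k b_k(x)$ and every cross term $b_k(y_k)$ with $k\neq i$, leaving only the constant
$$-\frac1{t^{mn}}\Big(\prod_{j\neq i}|Q_j|\Big)\int_{Q_i}|b_i(y_i)-(b_i)_{Q_i}|\,dy_i.$$
This is the key point and the main obstacle: unlike the single-commutator results of Chaffee \cite{C} and of Li and Wick \cite{LW}, the full sum couples all of the (a priori merely integrable) functions $b_k$, and the unknown terms cannot be treated as errors; inserting a single mean-zero factor is exactly what decouples them.

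Finally I would estimate. Averaging the displayed identity in absolute value over $x\in\tilde Q$, bounding by $\sum_\nu|a_\nu|$ times $|\tilde Q|^{1/p'-1}\sup_\nu\|[\Sigma\vec b,T](\dots)\|_{L^p}$, and invoking the boundedness of $[\Sigma\vec b,T]$ together with $\|\psi\,e^{-iv_\nu^i\cdot/t}\mathbf 1_{Q_i}\|_{L^{p_i}}\le2|Q_i|^{1/p_i}$ and $\|e^{-iv_\nu^j\cdot/t}\mathbf 1_{Q_j}\|_{L^{p_j}}=|Q_j|^{1/p_j}$, the volumes and the factor $t^{mn}$ combine (using $1/p=\sum_j1/p_j$) into a dimensionless constant, giving
$$\frac1{|Q_i|}\int_{Q_i}|b_i-(b_i)_{Q_i}|\,dy_i\le C\,\|[\Sigma\vec b,T]\|,$$
with $C$ depending only on $m$, $n$, $\rho$ and $\sum_\nu|a_\nu|$. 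Since $\eta_i$ and $\ell$ are arbitrary, $Q_i$ ranges over all cubes, whence $b_i\in BMO$; as $i$ was arbitrary, $b_1,\dots,b_m\in BMO$, which is the claim.
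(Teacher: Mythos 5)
Your argument is correct, and it takes a genuinely different route from the paper's. The paper never bounds the individual $BMO$ norms directly: it first proves a separate lemma showing that $b_{1},\dots,b_{m}\in BMO$ is equivalent to finiteness of the joint oscillation $\sup_{Q}|Q|^{-m}\int_{Q^{m}}|\sum_{i}(b_{i}(x_{i})-(b_{i})_{Q})|\,d\vec{x}$ (via an induction on signs $\sum_{i}|a_{i}|\le\sum_{\vec\sigma}|\sum_{i}\sigma_{i}a_{i}|$), then in the main proof uses the algebraic identity $\sum_{i}\sum_{j\neq i}(b_{j}(y_{j})-b_{i}(y_{i}))=0$ to rewrite $\sum_{i}(b_{i}(x_{i})-b_{i}(y_{i}))$ as a sum of $m$ full commutator kernels, inserts sign functions $s_{i}$, and bounds the joint quantity by the operator norm before invoking the lemma. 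You instead test the $i$-th slot with the mean-zero function $\psi=\mathrm{sgn}(b_{i}-(b_{i})_{Q_{i}})-(\mathrm{sgn}(b_{i}-(b_{i})_{Q_{i}}))_{Q_{i}}$, and the mean-zero property kills the output term $\sum_{k}b_{k}(x)$ and every cross term $b_{k}(y_{k})$, $k\neq i$, leaving exactly $-t^{-mn}\prod_{j\neq i}|Q_{j}|\int_{Q_{i}}|b_{i}-(b_{i})_{Q_{i}}|$; I checked this decoupling and the volume bookkeeping ($t^{-mn}\ell^{(m-1)n}=c\,\ell^{-n}$ against $\prod_{j}|Q_{j}|^{1/p_{j}}=|\tilde Q|^{1/p}$) and both are right. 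What each approach buys: yours is shorter, bypasses the lemma entirely, and yields the quantitative bound $\|b_{i}\|_{BMO}\le C\|[\Sigma\vec b,T]\|$ for each $i$ separately; the paper's lemma on joint mean oscillation is of independent interest (it is advertised in the abstract as a result in its own right), and their symmetrization identity is what substitutes for your mean-zero trick. Two small points you should make explicit in a final write-up, though both are present at the same level of informality in the paper: the ball $B$ must be chosen (after shrinking/rescaling, which is legitimate by homogeneity and restriction of the Fourier series to sub-balls) so that the error $\ell\sqrt{mn}$ lands strictly inside $tB$ and so that at least one $Q_{k}$ is genuinely disjoint from $\tilde Q$, which is what validates the kernel representation of the commutator on $\tilde Q$; and the interchange of $\sum_{\nu}$ with the integral should be justified by $\sum_{\nu}|a_{\nu}|<\infty$ together with boundedness of $K$ on the compact region away from its singularity.
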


\begin{remark}
As in Corollary 2 in \cite{GT1}, they listed a specific multilinear Calder\'{o}n-Zygmund kernel of the form
$$K(x_{1},\cdots,x_{m})=\frac{\Omega\Big(\frac{(x_{1},\cdots,x_{m})}{|(x_{1},\cdots,x_{m})|}\Big)}{|(x_{1},\cdots,x_{m})|^{mn}},$$
where $\Omega$ is an integrable function with mean value zero on the sphere $\mathbb{S}^{mn-1}$. The multilinear Riesz transforms are special examples of this form.
\end{remark}

\begin{remark}
In \cite{PT}, P\'{e}rez and Torres showed that $b_{1},\cdots,b_{m}\in BMO$ was sufficient to show the boundedness of commutators with $m-$linear Calder\'{o}n-Zygmund operator. Combined with Theorem \ref{main}, this immediately gives us that $b_{1},\cdots,b_{m}\in BMO$ are necessary and sufficient conditions for the boundedness of commutators with certain multilinear Calder\'{o}n-Zygmund operator.
\end{remark}

\begin{remark}
It will be difficult to discuss the
similar problem for the case of $p < 1$ in the target space. Since our proof required the use of H\"{o}lder's inequality with $p$ and $p'$. For this reason the case $p>1$ and $p<1$ have been occasionally
treated separately in the literatures and by different arguments. The results of Chaffee \cite{C} and Li and Wick \cite{LW} are also under the assumption $p>1$. Finally, in \cite{WZT}, we extend this results to $p<1$. Unfortunately, some of the techniques employed in the study of $[b_{i},T]_{i}$ in \cite{WZT} can not apply to $[\vec{b},T]$.
\end{remark}

Throughout this paper, the letter $C$ denotes constants which are independent of main variables and may change from one occurrence to another. $Q(x,r)$ denotes a cube centered at $x$, with side length $r$, sides parallel to the axes.

\section{\bf Proof of Theorem \ref{main}}

To obtain the desired result, we need the following lemma.
\begin{lemma}\label{l1}
Let $m\in \mathbb{N}$ and $\vec{b}=(b_{1},\cdots,b_{m})$ be a collection of locally integrable functions. The following statements are equivalent:
\begin{enumerate}
\item [\rm(a)] There exists a constant $C_{a}$ such that
    $$[\vec{b}]_{*}:=\sup_{Q}\frac{1}{|Q|^{m+1}}\int_{Q}\int_{Q^{m}}\Big|\sum_{i=1}^{m}\big(b_{i}(x)-b_{i}(y_{i})\big)\Big|d\vec{y}dx\leq C_{a}.$$
\item [\rm(b)] There exists a constant $C_{b}$ such that
    $$[\vec{b}]_{**}:=\sup_{Q}\frac{1}{|Q|^{m}}\int_{Q^m}\Big|\sum_{i=1}^{m}\big(b_{i}(x_{i})-(b_{i})_{Q}\big)\Big|d\vec{x}\leq C_{b}.$$
\item [\rm(c)] There exists a constant $C_{c}$ such that $$[\vec{b}]_{***}:=\sup_{Q}\frac{1}{|Q|^{2m}}\int_{Q^{m}}\int_{Q^{m}}\Big|\sum_{i=1}^{m}\big(b_{i}(x_{i})-b_{i}(y_{i})\big)\Big|d\vec{y}d\vec{x}\leq C_{c}.$$
\item [\rm(d)] $b_{1},\cdots,b_{m}\in BMO$.
\end{enumerate}
\end{lemma}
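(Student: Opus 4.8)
The plan is to prove Lemma~\ref{l1} by establishing a cycle of implications among the four conditions. The natural cycle I would aim for is $(d)\Rightarrow(a)\Rightarrow(b)\Rightarrow(d)$ together with $(a)\Leftrightarrow(c)$, since (a) and (c) differ only by averaging the single variable $x$ over $Q$ as well, and the quantity $[\vec b]_{*}$ is obtained from $[\vec b]_{***}$ by replacing one inner integral with a point evaluation and then averaging — these two directions should follow quickly from Jensen/Fubini type manipulations. Let me think about the substantive links.

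For $(d)\Rightarrow(a)$: assuming each $b_i\in BMO$, I would add and subtract the cube averages $(b_i)_Q$ inside the sum, writing $\sum_i(b_i(x)-b_i(y_i))=\sum_i(b_i(x)-(b_i)_Q)-\sum_i(b_i(y_i)-(b_i)_Q)$, then apply the triangle inequality and integrate. Each resulting term is controlled by $\sum_i\frac{1}{|Q|}\int_Q|b_i-(b_i)_Q|\le\sum_i\|b_i\|_{BMO}$, so $[\vec b]_{*}\lesssim\sum_i\|b_i\|_{BMO}$. This direction is routine. Similarly $(b)$ follows from $(d)$ directly by the same triangle-inequality splitting, so really the only content is getting back to $BMO$.

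The crux is $(b)\Rightarrow(d)$ (equivalently $(a)$ or $(c)\Rightarrow(d)$): recovering that \emph{each individual} $b_i\in BMO$ from a hypothesis that only controls the mean oscillation of the \emph{sum} $\sum_i(b_i(x_i)-(b_i)_Q)$ over the product cube $Q^m$. The difficulty is that cancellation between the different $b_i$ could, a priori, make the sum small even when individual functions oscillate wildly. The key idea I would use to break this is that the variables $x_1,\dots,x_m$ are independent: fix an index $i_0$ and integrate out all variables $x_j$ with $j\neq i_0$ first. For each fixed $x_{i_0}$, the inner average over the remaining variables of $\sum_{j}(b_j(x_j)-(b_j)_Q)$ equals $(b_{i_0}(x_{i_0})-(b_{i_0})_Q)+\sum_{j\neq i_0}\big(\frac{1}{|Q|}\int_Q b_j-(b_j)_Q\big)=b_{i_0}(x_{i_0})-(b_{i_0})_Q$, since the averaged terms vanish identically. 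Thus by Jensen's inequality (pulling the absolute value outside the average over the $j\neq i_0$ variables),
$$\frac{1}{|Q|}\int_Q\big|b_{i_0}(x_{i_0})-(b_{i_0})_Q\big|\,dx_{i_0}\le\frac{1}{|Q|^m}\int_{Q^m}\Big|\sum_{i=1}^m\big(b_i(x_i)-(b_i)_Q\big)\Big|d\vec x\le[\vec b]_{**}.$$
Since this holds for every cube $Q$ and every index $i_0$, each $b_{i_0}\in BMO$ with $\|b_{i_0}\|_{BMO}\le[\vec b]_{**}$, completing the cycle.

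I expect the main obstacle to be precisely this $(b)\Rightarrow(d)$ step, and the essential trick is the Jensen/Fubini argument above exploiting that averaging $b_j-(b_j)_Q$ over $Q$ yields exactly zero, so that integrating out the off-diagonal variables collapses the sum to a single term without loss. Once that observation is in hand the rest is bookkeeping: I would verify $(a)\Leftrightarrow(c)$ by noting $[\vec b]_{*}\le[\vec b]_{***}$ trivially (point evaluation bounded by average, after relabeling) and $[\vec b]_{***}\le 2[\vec b]_{*}$ or a similar constant by inserting $(b_i)_Q$ and using the triangle inequality together with the one-variable estimate just derived. Care must be taken only with the absolute-value inside-versus-outside manipulations, all of which are justified by Jensen's inequality in the correct direction.
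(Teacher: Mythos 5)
Your argument is correct in its essential content, but it takes a genuinely different route from the paper at the crucial step. The paper closes the cycle as $(a)\Rightarrow(b)\Rightarrow(c)\Rightarrow(d)\Rightarrow(a)$, and its proof of $(c)\Rightarrow(d)$ rests on the combinatorial inequality $\sum_{i=1}^{m}|a_{i}|\leq\sum_{\vec{\sigma}\in\{-1,1\}^{m}}\big|\sum_{i=1}^{m}\sigma_{i}a_{i}\big|$, proved by induction on $m$; after a change of variables swapping $x_{i}$ and $y_{i}$ wherever $\sigma_{i}=-1$, each signed integral is bounded by $[\vec{b}]_{***}$, which decouples the sum at the cost of a factor $2^{m}$. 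Your mean-zero/Fubini collapse --- integrating out all variables $x_{j}$, $j\neq i_{0}$, so that the terms $b_{j}(x_{j})-(b_{j})_{Q}$ average to zero and Jensen's inequality yields $\|b_{i_{0}}\|_{BMO}\leq[\vec{b}]_{**}$ directly --- achieves the same decoupling more simply and with constant $1$ instead of $2^{m}$; it is a clean alternative and arguably preferable. The one place you overreach is the claim that $[\vec{b}]_{*}\leq[\vec{b}]_{***}$ holds ``trivially'' by ``point evaluation bounded by average'': in $[\vec{b}]_{*}$ all the $b_{i}$ are evaluated at the \emph{same} point $x$, which is a diagonal restriction of the integrand of $[\vec{b}]_{***}$, not an average of it, so no pointwise domination is available. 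This is easily repaired --- either route $(c)$ through $(d)$ by applying your own mean-zero trick to $[\vec{b}]_{***}$ (integrating out $x_{j},y_{j}$ for $j\neq i_{0}$ collapses the sum to $b_{i_{0}}(x_{i_{0}})-b_{i_{0}}(y_{i_{0}})$ just as before), or note that $(d)\Rightarrow(a)$ by your triangle-inequality computation and $(a)\Rightarrow(c)$ by inserting $(b_{i})_{Q}$ --- but as written that link in your cycle is not justified.
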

\begin{proof}
It is a simple observation that $(a)\Rightarrow (b)$ and $(b)\Rightarrow (c)$ are obvious, we need only give the proofs of $(c)\Rightarrow (d)$ and $(d)\Rightarrow (a)$.

$(c)\Rightarrow (d)$.
We first give the proof of the following inequality. Let
$$\Omega_{m}=\Big\{\vec{\sigma}_{m}=(\sigma_{1},\cdots,\sigma_{m}):\sigma_{i}\in \{-1,1\},i=1,\cdots,m\Big\}.$$ 
For any $a_{i}\in \mathbb{R}^n$, we then have the following.
\begin{equation}\label{eq1}
\sum_{i=1}^{m}|a_{i}|\leq \sum_{\vec{\sigma}_{m}\in \Omega_{m}}\Big|\sum_{i=1}^{m}\sigma_{i}a_{i}\Big|.
\end{equation}
For $k=2$, from the fact that
$|a_{1}|,|a_{2}|\leq |a_{1}-a_{2}|+|a_{1}+a_{2}|$, we have
$$\sum_{i=1}^{2}|a_{i}|\leq \sum_{\vec{\sigma}_{2}\in \Omega_{2}}\Big|\sum_{i=1}^{2}\sigma_{i}a_{i}\Big|.$$
We assume that the inequality \eqref{eq1} holds for $m=k\geq 2$. That is,
$$\sum_{i=1}^{k}|a_{i}|\leq \sum_{\vec{\sigma}_{k}\in \Omega_{k}}\Big|\sum_{i=1}^{k}\sigma_{i}a_{i}\Big|,$$
then for $m=k+1$,
\begin{eqnarray*}
\sum_{i=1}^{k+1}|a_{i}|&=&\sum_{i=1}^{k}|a_{i}|+|a_{k+1}|\\
&\leq&\sum_{\vec{\sigma}_{k}\in \Omega_{k}}\Big|\sum_{i=1}^{k}\sigma_{i}a_{i}\Big|+|a_{k+1}|\\
&\leq&\sum_{\vec{\sigma}_{k}\in \Omega_{k}}\bigg(\Big|\sum_{i=1}^{k}\sigma_{i}a_{i}\Big|+|a_{k+1}|\bigg)\\
&\leq&\sum_{\vec{\sigma}_{k}\in \Omega_{k}}\sum_{\sigma_{k+1}\in \{-1,1\}}\bigg|\sum_{i=1}^{k}\sigma_{i}a_{i}+\sigma_{k+1}a_{k+1}\bigg|\\
&\leq&\sum_{\vec{\sigma}_{k+1}\in \Omega_{k+1}}\Big|\sum_{i=1}^{k+1}\sigma_{i}a_{i}\Big|.
\end{eqnarray*}
Then we prove the inequality \eqref{eq1}.

Applying the inequality \eqref{eq1}, we obtain that for any cube $Q$,
\begin{eqnarray*}
&&\frac{1}{|Q|^{2m}}\int_{Q^{2m}}\sum_{i=1}^{m}\big|b_{i}(x_{i})-b_{i}(y_{i})\big|d\vec{x}d\vec{y}\\
&&\leq \sum_{\vec{\sigma}_{k+1}\in \Omega_{k+1}}\frac{1}{|Q|^{2m}}\int_{Q^{2m}}\Big|\sum_{i=1}^{m}\sigma_{i}\big(b_{i}(x_{i})-b_{i}(y_{i})\big)\Big|d\vec{x}d\vec{y}\\
&&\leq \sum_{\vec{\sigma}_{k+1}\in \Omega_{k+1}}[\vec{b}]_{***}\\
&&\leq C[\vec{b}]_{***}.
\end{eqnarray*}
Which yields that for $i=1,\cdots,m$,
$$\frac{1}{|Q|}\int_{Q}|b_{i}(x_{i})-(b_{i})_{Q}|dx_{i}\leq\frac{1}{|Q|^{2}}\int_{Q^{2}}|b_{i}(x_{i})-b_{i}(y_{i})|dx_{i}dy_{i}\leq C[\vec{b}]_{***}.$$
Then $b_{1},\cdots,b_{m}\in BMO$.

$(d)\Rightarrow (a)$. For any cube $Q$, we conclude that
\begin{eqnarray*}
&&\frac{1}{|Q|^{m+1}}\int_{Q}\int_{Q^{m}}\Big|\sum_{i=1}^{m}\big(b_{i}(x)-b_{i}(y_{i})\big)\Big|d\vec{y}dx\\
&&\leq\sum_{i=1}^{m}\frac{1}{|Q|^{2}}\int_{Q^{2}}|b_{i}(x)-b_{i}(y_{i})|dy_{i}dx\\
&&\leq\sum_{i=1}^{m}\bigg(\frac{1}{|Q|}\int_{Q}|b_{i}(x)-(b_{i})_{Q}|dx+\frac{1}{|Q|}\int_{Q}|b_{i}(y_{i})-(b_{i})_{Q}|dy_{i}\bigg)\\
&&\leq C\sum_{i=1}^{m}\|b_{i}\|_{BMO}.
\end{eqnarray*}
Thus, we complete the proof of Lemma \ref{l1}.
\end{proof}

\vspace{0.5cm}
{\bf Proof of Theorem \ref{main}.}
Let $z_{0}\in \mathbb{R}^n$ such that $|z_{0}|>m\sqrt{n}$ and let $\delta\in (0,1)$ small enough. Take $\mathcal{B}=B\big((z_{0},0,\cdots,0),\delta \sqrt{mn}\big)\subset \mathbb{R}^{mn}$ be the ball for which we can express $\frac{1}{K}$ as an absolutely convergent Fourier series of the form
$$\frac{1}{K(y_{1},\cdots,y_{m})}=\sum_{k}a_{k}e^{iv_{k}\cdot(y_{1},\cdots,y_{m})}, \quad (y_{1},\cdots,y_{m})\in \mathcal{B},$$
with $\sum_{k}|a_{k}|<\infty$ and we do not care about the vectors $v_{k}\in \mathbb{R}^{mn},$ but we will at times express them as $v_{k}=(v_{k}^{1},\cdots,v_{k}^{m}).$

Set $z_{1}=\delta^{-1}z_{0}$ and note that
$$\big(|y_{1}-z_{1}|^{2}+|y_{2}|^{2}+\cdots+|y_{m}|^{2}\big)^{1/2}<\sqrt{mn}\Rightarrow \big(|\delta y_{1}-z_{0}|^{2}+|\delta y_{1}|^{2}+\cdots+|\delta y_{m}|^{2}\big)^{1/2}<\delta \sqrt{mn}.$$
Then for any $(y_{1},\cdots,y_{m})$ satisfying the inequality on the left, we have
\begin{equation}\label{k1}
\frac{1}{K(y_{1},\cdots,y_{m})}=\frac{\delta^{-mn}}{K(\delta y_{1},\cdots,\delta y_{m})}=\delta^{-mn}\sum_{k}a_{k}e^{i\delta v_{k}\cdot(y_{1},\cdots,y_{m})}.
\end{equation}

Let $Q=Q(x_{0},r)$ be any arbitrary cube in $\mathbb{R}^n$. Set $\tilde{z}=x_{0}+rz_{1}$ and take $Q'=Q(\tilde{z},r)\subset \mathbb{R}^n$. So for any $x\in Q$ and $y_{1},\cdots,y_{m}\in Q'$, we obtain that for any $i,j\in\{1,2,\cdots,m\}$ with $i\neq j$,
$$\Big|\frac{x-y_{i}}{r}-z_{1}\Big|\leq \Big|\frac{y_{i}-\tilde{z}}{r}\Big|+\Big|\frac{x-x_{0}}{r}\Big|\leq \sqrt{n}$$
and
$$\Big|\frac{y_{i}-y_{j}}{r}\Big|\leq \sqrt{n}.$$
It follows that
$$\bigg(\Big|\frac{x-y_{i}}{r}-z_{1}\Big|^{2}+\sum_{j\neq i}\Big|\frac{y_{j}-y_{i}}{r}\Big|^{2}\bigg)^{1/2}\leq \sqrt{mn}.$$
We conclude that $$\frac{1}{K(\frac{y_{i}-x_{i}}{r},\frac{y_{i}-y_{1}}{r},\cdots,\frac{y_{i}-y_{i-1}}{r},\frac{y_{i}-y_{i+1}}{r},\cdots,\frac{y_{i}-y_{m}}{r})}$$ can be expressed as an absolutely convergent Fourier series as \eqref{k1} for all $x\in Q$ and $y_{1},\cdots,y_{m}\in Q'$. 

On the other hand, observe that
$$\sum_{i=1}^{m}\sum_{j\neq i}\big(b_{j}(y_{j})-b_{i}(y_{i})\big)=0,$$
it follows that
\begin{eqnarray*}
\sum_{i=1}^{m}\big(b_{i}(x_{i})-b_{i}(y_{i})\big)
&=&\sum_{i=1}^{m}\big(b_{i}(x_{i})-b_{i}(y_{i})\big)+\sum_{i=1}^{m}\sum_{j\neq i}\big(b_{j}(y_{j})-b_{i}(y_{i})\big)\\
&=&\sum_{i=1}^{m}\Big(\big(b_{i}(x_{i})-b_{i}(y_{i})\big)+\sum_{j\neq i}\big(b_{j}(y_{j})-b_{i}(y_{i})\big)\Big).
\end{eqnarray*}
Which shows that
\begin{eqnarray*}
&&\int_{Q^{m}}\Big|\sum_{i=1}^{m}\big(b_{i}(x_{i})-(b_{i})_{Q'}\big)\Big|d\vec{x}\\
&&=\frac{1}{|Q|^{m}}\int_{Q^{m}}\Big|\int_{(Q')^{m}}\sum_{i=1}^{m}\big(b_{i}(x_{i})-b_{i}(y_{i})\big)d\vec{y}\Big|d\vec{x}\\
&&=\frac{1}{|Q|^{m}}\int_{Q^{m}}\Big|\int_{(Q')^{m}}\sum_{i=1}^{m}\Big(\big(b_{i}(x_{i})-b_{i}(y_{i})\big)+\sum_{j\neq i}\big(b_{j}(y_{j})-b_{i}(y_{i})\big)\Big)d\vec{y}\Big|d\vec{x}\\
&&\leq \sum_{i=1}^{m}\frac{1}{|Q|}\int_{Q}\Big|\int_{(Q')^{m}}\Big(\big(b_{i}(x_{i})-b_{i}(y_{i})\big)+\sum_{j\neq i}\big(b_{j}(y_{j})-b_{i}(y_{i})\big)\Big)d\vec{y}\Big|dx_{i}\\
&&=\sum_{i=1}^{m}\frac{1}{|Q|}\int_{Q}\int_{(Q')^{m}}\Big(\big(b_{i}(y_{i})-b_{i}(x_{i})\big)+\sum_{j\neq i}\big(b_{i}(y_{i})-b_{j}(y_{j})\big)\Big)d\vec{y}\cdot s_{i}(x_{i})dx_{i},
\end{eqnarray*}
where
$$s_{i}(x_{i})={\rm sgn}\Big(\int_{(Q')^{m}}\Big(\big(b_{i}(y_{i})-b_{i}(x_{i})\big)+\sum_{j\neq i}\big(b_{i}(y_{i})-b_{j}(y_{j})\big)\Big)d\vec{y}\Big).$$
For any $i\in\{1,2,\cdots,m\}$, we write
$$f^{i,k}_{1}(x_{i})=e^{-i\frac{\delta}{r}\nu_{k}^{1}\cdot x_{i}}s_{i}(x_{i})\chi_{Q}(x_{i}),$$
$$f^{i,k}_{2}(y_{1})=e^{-i\frac{\delta}{r}\nu_{k}^{2}\cdot y_{1}}\chi_{Q'}(y_{1}),$$
$$\vdots$$
$$f^{i,k}_{i}(y_{i-1})=e^{-i\frac{\delta}{r}\nu_{k}^{2}\cdot y_{i-1}}\chi_{Q'}(y_{i-1}),$$
$$f^{i,k}_{i+1}(y_{i+1})=e^{-i\frac{\delta}{r}\nu_{k}^{2}\cdot y_{i+1}}\chi_{Q'}(y_{i+1}),$$
$$\vdots$$
$$f^{i,k}_{m}(y_{m})=e^{-i\frac{\delta}{r}\nu_{k}^{m}\cdot y_{m}}\chi_{Q'}(y_{m})$$
and
$$g^{i,k}(y_{i})=e^{i\frac{\delta}{r}\nu_{k}\cdot (y_{i},\cdots,y_{i})}\chi_{Q'}(y_{i}).$$
Then, we obtain
\begin{eqnarray*}
&&\int_{Q^{m}}\Big|\sum_{i=1}^{m}\big(b_{i}(x_{i})-(b_{i})_{Q'}\big)\Big|d\vec{x}\\
&&=\delta^{-mn}r^{mn}\sum_{i=1}^{m}\frac{\sum_{k} a_{k}}{|Q|}\int_{(\mathbb{R}^n)^{m+1}}\Big(\big(b_{i}(y_{i})-b_{i}(x_{i})\big)+\sum_{j\neq i}\big(b_{i}(y_{i})-b_{j}(y_{j})\big)\Big)\\
&&\quad \times K(y_{i}-x_{i},y_{i}-y_{1},\cdots,y_{i}-y_{i-1},y_{i}-y_{i+1},\cdots,y_{i}-y_{m})\\
&&\quad \times f_{1}^{i,k}(x_{i})f_{2}^{i,k}(y_{1})\cdots f_{i}^{i,k}(y_{i-1})f_{i+1}^{i,k}(y_{i+1})\cdots f_{m}^{i,k}(y_{m})g^{i,k}(y_{i})d\vec{y}dx_{i}\\
&&\leq \sum_{i=1}^{m}\sum_{k}|a_{k}|\delta^{-mn}|Q|^{m-1}\int_{\mathbb{R}^n}
\big|[\Sigma\vec{b},T](f^{i,k}_{1},\cdots,f^{i,k}_{m})(y_{i})\big||g^{i,k}(y_{i})|dy_{i}\\
&&\leq \sum_{i=1}^{m}\sum_{k}|a_{k}|\frac{|Q|^{m}\delta^{-mn}}{|Q|^{1/p}}
\big\|[\Sigma\vec{b},T](f^{i,k}_{1},\cdots,f^{i,k}_{m})\big\|_{L^{p}}\\
&&\leq C|Q|^{m}\|[\Sigma\vec{b},T]\|_{L^{p_{1}}\times\cdots\times L^{p_{m}}\rightarrow L^{p}}\sum_{k}|a_{k}|.
\end{eqnarray*}
Which implies that
\begin{eqnarray*}
\frac{1}{|Q|^{m}}\int_{Q^{m}}\Big|\sum_{i=1}^{m}\big(b_{i}(x_{i})-(b_{i})_{Q}\big)\Big|d\vec{x}
&\leq& \frac{C}{|Q|^{m}}\int_{Q^{m}}\Big|\sum_{i=1}^{m}\big(b_{i}(x_{i})-(b_{i})_{Q'}\big)\Big|d\vec{x}\\
&\leq&C\|[\Sigma\vec{b},T]\|_{L^{p_{1}}\times\cdots\times L^{p_{m}}\rightarrow L^{p}}\sum_{k}|a_{k}|.
\end{eqnarray*}
By lemma \ref{l1}, we get $b_{1},\cdots,b_{m}\in BMO$. \qed

\color{black}


\begin{thebibliography}{99}
\bibitem{B}
S. Bloom, A commutator theorem and weighted BMO, Trans. Amer. Math. Soc. {\bf292}(1985), 103--122.

\bibitem{C}
L. Chaffee, Characterizations of BMO through commutators of bilinear singular integral operators, Proc. Roy. Soc. Edinburgh Sect. A {\bf146} (2016), no. 6, 1159--1166.

\bibitem{CM}
R. Coifman and Y. Meyer, Au-del\`{a} des Op\'{e}rateurs Pseudo-diffe\'{r}entiels, Ast\'{e}risque {\bf 57}, Soci\'{e}t\'{e} Math. de France, 1978.

\bibitem{CRW}
R. Coifman, R. Rochberg and G. Weiss, Factorization theorems for Hardy
spaces in several variables, Ann. of Math. {\bf103}(1976), 611--635.

\bibitem{G}
L. Grafakos, Classical and Modern Fourier Analysis, Pearson Education, Inc., Upper Saddle River
N.J., 2004.

\bibitem{GT1}
L. Grafakos and R.H. Torres, Multilinear Calder\'{o}n-Zygmund theory, Adv. Math. {\bf165}(2002), 124--164.

\bibitem{J}
S. Janson, Mean oscillation and commutators of singular integral operators, Ark. Math. {\bf 16}(1978), 263--270.

\bibitem{JN}
F. John and L. Nirenberg, On functions of bounded mean oscillation, Comm Pure Appl. Math. {\bf 2}(1961), 415--426.

\bibitem{KL1}
S. Krantz and S.-Y. Li, Boundedness and compactness of integral operators on spaces of homogeneous
type and applications, I. J. Math. Anal. Appl. {\bf 258}(2001), 629--641.

\bibitem{KL2}
S. Krantz and S.-Y. Li, Boundedness and compactness of integral operators on spaces of homogeneous
type and applications, II. J. Math. Anal. Appl. {\bf 258}(2001), 642--657.


\bibitem{LW}
J. Li and B.D. Wick, Weak factorizations of the Hardy space $H^{1}(\mathbb{R}^n)$ in terms of
multilinear Riesz transforms, Canad. Math. Bull. {\bf60}(2017), no. 3, 571--585.

\bibitem{PT}
C. P\'{e}rez and R.H. Torres, Sharp maximal function estimates for multilinear singular
integrals, Contemp. Math. {\bf320}(2003), 323--331.

\bibitem{S}
E. M. Stein, Harmonic Analysis: Real-Variable Methods, Orthogonality, and Oscillatory Integrals,
Princeton Mahtematical Series 43, Princeton University Press, Princeton, N.J., 1993.

\bibitem{U}
A. Uchiyama, On the compactness of operators of Hankel type, T\^{o}hoku Math. J. {\bf 30}(1978), 163--171.

\bibitem{WZT}
D.H. Wang, J. Zhou and Z.D. Teng, Characterizations of weighted BMO space and its
application, arXiv:1707.01639.

\end{thebibliography}
\end{document}